\documentclass[a4paper,11pt]{article}

\usepackage[a4paper]{geometry}

\usepackage{authblk}

\usepackage{amsmath}
\usepackage{amsthm}
\usepackage{amssymb}
\usepackage{amsfonts}
\usepackage{mathtools}
\usepackage{bm}

\usepackage{graphicx}
\usepackage{tikz}
\usetikzlibrary{cd,backgrounds,positioning,trees,shapes,arrows,patterns,topaths,calc}

\usepackage[
  backend=biber,
  style=numeric,
  maxcitenames=9,
  maxbibnames=99,
  abbreviate=false,
  eprint=false,
  doi=true,
  giveninits=true,
  casechanger=expl3,
  isbn=false
]{biblatex}
\DeclareFieldFormat{doi}{\url{https://doi.org/#1}}

\usepackage{hyperref}
\hypersetup{
  hidelinks,
  linktoc = all,
  breaklinks = true,
}
\usepackage[nameinlink]{cleveref}

\newtheorem{theorem}{Theorem}
\newtheorem{lemma}[theorem]{Lemma}

\newtheorem{fact}[theorem]{Fact}

\theoremstyle{definition}
\newtheorem{definition}[theorem]{Definition}
\newtheorem{remark}[theorem]{Remark}

\theoremstyle{remark}

\numberwithin{theorem}{section}

\AddToHook{env/lemma/begin}{\crefalias{theorem}{lemma}}
\AddToHook{env/corollary/begin}{\crefalias{theorem}{corollary}}
\AddToHook{env/fact/begin}{\crefalias{theorem}{fact}}
\AddToHook{env/conjecture/begin}{\crefalias{theorem}{conjecture}}
\AddToHook{env/question/begin}{\crefalias{theorem}{question}}
\AddToHook{env/proposition/begin}{\crefalias{theorem}{proposition}}

\AddToHook{env/definition/begin}{\crefalias{theorem}{definition}}
\AddToHook{env/remark/begin}{\crefalias{theorem}{remark}}
\AddToHook{env/example/begin}{\crefalias{theorem}{example}}
\AddToHook{env/notation/begin}{\crefalias{theorem}{notation}}

\AddToHook{env/claim/begin}{\crefalias{theorem}{claim}}

\crefname{lemma}{lemma}{lemmas}
\crefformat{lemma}{#2lemma~#1#3} 
\Crefformat{lemma}{#2Lemma~#1#3} 

\crefname{corollary}{corollary}{corollaries}
\crefformat{corollary}{#2corollary~#1#3}
\Crefformat{corollary}{#2Corollary~#1#3}

\crefname{fact}{fact}{facts}
\crefformat{fact}{#2fact~#1#3}
\Crefformat{fact}{#2Fact~#1#3}

\crefname{conjecture}{conjecture}{conjectures}
\crefformat{conjecture}{#2conjecture~#1#3}
\Crefformat{conjecture}{#2Conjecture~#1#3}

\crefname{question}{question}{questions}
\crefformat{question}{#2question~#1#3}
\Crefformat{question}{#2Question~#1#3}

\crefname{proposition}{proposition}{propositions}
\crefformat{proposition}{#2proposition~#1#3} 
\Crefformat{proposition}{#2Proposition~#1#3} 

\crefname{definition}{definition}{definitions}
\crefformat{definition}{#2definition~#1#3}
\Crefformat{definition}{#2Definition~#1#3}

\crefname{remark}{remark}{remarks}
\crefformat{remark}{#2remark~#1#3}
\Crefformat{remark}{#2Remark~#1#3}

\crefname{example}{example}{examples}
\crefformat{example}{#2example~#1#3}
\Crefformat{example}{#2Example~#1#3}

\crefname{notation}{notation}{notations}
\crefformat{notation}{#2notation~#1#3}
\Crefformat{notation}{#2Notation~#1#3}

\crefname{claim}{claim}{claims}
\crefformat{claim}{#2claim~#1#3}
\Crefformat{claim}{#2Claim~#1#3}

\renewcommand{\sf}[1]{\mathsf{#1}}

\newcommand{\fr}[1]{\mathcal{#1}}
    \newcommand{\F}{\fr{F}}

\newcommand{\class}[1]{\mathcal{#1}}
    \newcommand{\V}{\class{V}}

\renewcommand{\L}{\mathcal{L}}

\newcommand{\FFr}{\sf{FFr}}

\newcommand{\fmp}{\mathrm{fmp}}

\newcommand{\pow}[1]{\mathcal{P} (#1)}

\newcommand{\logic}[1]{\mathsf{#1}}
    \newcommand{\Prop}{\logic{Prop}}
    \newcommand{\Fml}{\logic{Fml}}

    \newcommand{\Th}{\logic{Th}}
    \newcommand{\T}{\logic{T}}
    
    \newcommand{\Eq}{\logic{Eq}}

\newcommand{\NExt}[1]{\mathop{\mathsf{NExt}}{#1}}
\newcommand{\Ext}[1]{\mathop{\mathsf{Ext}}{#1}}

\renewcommand{\sf}[1]{\mathsf{#1}}

\newcommand{\Val}{\mathsf{Val}}

\renewcommand{\phi}{\varphi}

\newcommand{\conti}{2^{\aleph_0}}

\newcommand{\Lor}{\bigvee}
\newcommand{\Land}{\bigwedge}

\newcommand{\tab}{\sf{tab}}

\newenvironment{acknowledgements}{%
  \begin{abstract}
}{%
  \end{abstract}
}

\newenvironment{funding}{%
  \begin{abstract}
}{%
  \end{abstract}
}

\usepackage{orcidlink}

\begin{document}
\title{The Cardinalities of Intervals of Equational \\ Theories and Logics}

\author[1]{Juan P. Aguilera \orcidlink{0000-0002-2768-6714}}
\author[2]{Nick Bezhanishvili \orcidlink{0009-0005-6692-5051}}
\author[2]{Tenyo Takahashi \orcidlink{0009-0003-8147-7045}}

\affil[1]{Institute of Discrete Mathematics and Geometry, TU Wien}
\affil[2]{Institute for Logic, Language and Computation, University of Amsterdam}

\date{}
\maketitle

\begin{abstract}
We study the cardinality of classes of equational theories (varieties) and logics by applying descriptive set theory. We affirmatively solve open problems raised by Jackson and Lee [Trans. Am. Math. Soc. 370 (2018), pp. 4785-4812] regarding the cardinalities of subvariety lattices, and by Bezhanishvili et al. [J. Math. Log. (2025), in press] regarding the degrees of the finite model property (fmp). By coding equations and formulas by natural numbers, and theories and logics by real numbers, we examine their position in the Borel hierarchy. We prove that every interval of equational theories in a countable language corresponds to a $\bm{\Pi}^0_1$ set, and every fmp span of a normal modal logic to a $\bm{\Pi}^0_2$ set. It follows that they have cardinality either $\leq \aleph_0$ or $2^{\aleph_0}$, provably in ZFC. In the same manner, we observe that the set of pretabular extensions of a tense logic is a $\bm{\Pi}^0_2$ set, so its cardinality is either $\leq \aleph_0$ or $2^{\aleph_0}$. We also point out a negative solution to another open problem raised by Jackson and Lee, \emph{op. cit.}, regarding the existence of independent systems, which relies on Je{\v z}ek et al. [Bull. Aust. Math. Soc. 42 (1990), pp. 57-70].
\end{abstract}

\section{Introduction}
An important theme in the studies of universal algebra and non-classical logics involves determining the cardinality of classes of equational theories (equivalently, varieties) and logics. A \emph{variety} is an equationally definable class of algebras \cite{ACourseInUniversalAlgebra1981}. Finite monoids that generate varieties with $\conti$ many subvarieties were constructed in \cite{jacksonInterpretingGraphColorability2006}, and later a finite monoid that generates a variety with exactly $\aleph_0$ subvarieties was found in \cite{jacksonMonoidVarietiesExtreme2018}.  
In \cite{jacksonMonoidVarietiesExtreme2018}, Jackson and Lee raised the question of whether the subvariety lattice of every variety has cardinality either $\leq \aleph_0$ or $2^{\aleph_0}$ \cite[Question 6.4 (ii)]{jacksonMonoidVarietiesExtreme2018}. 

In the logical context, Jankov \cite{jankov1968} showed that there are $\conti$ many distinct superintuitionistic logics, and consequently $\conti$ many modal logics in extensions of  $\mathsf{K}$, $\mathsf{K4}$, $\mathsf{S4}$, etc.. This laid a foundation for the study of cardinalities of classes of modal and superintuitionistic logics. 
Fine \cite{Fine71} showed that every normal extension of the modal logic $\mathsf{S4.3}$ is finitely axiomatizable. As a consequence, he obtained that $\mathsf{S4.3}$ has countably many normal extensions. 
For more information on cardinalities of intervals of modal and superintuitionistic logics, we refer to \cite{czModalLogic1997}.

For a superintuitionistic or modal logic $L$, the \emph{degree of fmp} of $L$ refers to the cardinality of the \emph{fmp span} of $L$, that is, the set of logics that define the same class of finite Kripke frames as $L$. Bezhanishvili et al.~\cite{bezhanishvili2023degreesfinitemodelproperty} proved that every cardinal $\kappa \leq \aleph_0$ and $\kappa = \conti$ is realized as the degree of fmp for some superintuitionistic and transitive modal logic. They also asked the question whether it is provable without the Continuum Hypothesis (CH) that the degree of fmp can (only) be either $\leq \aleph_0$ or $2^{\aleph_0}$ \cite[Section 8 (1)]{bezhanishvili2023degreesfinitemodelproperty}. This question is closely connected to the one posed by Jackson and Lee. A logic is \emph{tabular} if it is characterized by a single finite Kripke frame, and is \emph{pretabular} if it is not tabular while all its consistent extensions are tabular. Recently, Chen \cite{chen2025pretabulartenselogicss4t} showed that for every cardinal $\kappa \leq \aleph_0$ and $\kappa = \conti$, there is a tense logic that has exactly $\kappa$ pretabular extensions. Even though not explicitly asked in the paper, the question of the cardinals $\aleph_0 < \kappa < \conti$ naturally arises in the same spirit as the above two open questions. 

The common theme of these questions is the cardinality of classes of equational theories and logics between countable and continuum, hence they are inherently set-theoretic. It is not to be expected that standard tool kits in the field of universal algebra or modal and superintuitionistic logics could be useful for answering these kinds of questions, as they fall in the scope of ZFC, of which CH is independent. An explicit construction of uncountably many equational theories or logics typically results in constructing $\conti$-many of such equational theories or logics. 

In this paper, we apply descriptive set theory via coding to address the aforementioned three questions. The coding method aligns with the standard technique of G\"odel numbers in computability theory. The idea behind the proofs is to effectively code equations (resp. formulas) by natural numbers, so that varieties (resp. logics) correspond to subsets of $\omega$, which in turn are identified with elements of the Cantor space $2^\omega$, namely, real numbers. Thus, instead of varieties or logics, we can count the corresponding sets of reals. That descriptive set theory applies in this context is no surprise, as it is exactly the theory of sets of reals. In descriptive set theory, sets of reals are classified in several hierarchies in terms of their ``complexity'' (see \Cref{Sec 2}), and their properties are studied with respect to their complexity. The main result that we rely on in this paper is that Borel sets have cardinality either $\leq \aleph_0$ or $2^{\aleph_0}$. 

Therefore, all we have to do is verify that the sets of reals corresponding to the sets of equational theories and logics in question are Borel. Specifically, we show that an interval of equational theories in a countable language corresponds to a $\bm{\Pi}^0_1$ set, and the fmp span of a modal logic to a $\bm{\Pi}^0_2$ set. This implies that their cardinality is either $\leq \aleph_0$ or $\conti$, provably in ZFC, answering the open questions by Jackson and Lee \cite[Question 6.4 (ii)]{jacksonMonoidVarietiesExtreme2018} and by Bezhanishvili et al.~\cite[Section 8 (1)]{bezhanishvili2023degreesfinitemodelproperty} in the positive. On the other hand, we also point out that it follows from \cite{jezekEquationalTheoriesSemilattices1990} that there is an equational theory such that its extensions form a continuum chain, which thus admits no independent system. This example provides a negative solution to \cite[Question 6.4 (i)]{jacksonMonoidVarietiesExtreme2018} in its stated generality, although it does not settle the question for monoids. Moreover, it implies that the cardinality result of intervals of equational theories cannot be shown by constructing independent systems, which explains why we need to employ the toolkit of descriptive set theory. We also observe that even though the definition of pretabularity is more involved and contains a universal quantification over logics, a more sophisticated characterization shows that the corresponding sets of reals are $\bm{\Pi}^0_2$. This yields a similar cardinality result and completes the cardinality characterization of pretabular tense logics in \cite{chen2025pretabulartenselogicss4t}.

The paper is organized as follows. \Cref{Sec 2} recalls preliminaries on descriptive set theory, universal algebra, and modal logic. The main results are proved in \Cref{Sec 3}. \Cref{Sec 4} concludes the paper with open questions.

\section{Preliminaries} \label{Sec 2}

We work in ZFC throughout the paper.

\subsection{Descriptive set theory}
We assume familiarity with basic definitions of descriptive set theory. We use \cite[Section 12]{kanamori2008higher} and \cite[Chapters 11 and 25]{jechSetTheory2003} as our main references for descriptive set theory. 

We will work with subsets of the Cantor space $2^\omega$, elements of which are called \emph{reals}. Reals are naturally identified with subsets of $\omega$. For $a \in 2^\omega$ and $m \in \omega$, the descriptions $m \in a$ and $m \notin a$ mean $a(m) = 1$ and $a(m) = 0$ respectively. The \emph{Borel hierarchy} ($\bm{\Sigma}^0_\alpha$, $\bm{\Pi}^0_\alpha$ for $0 < \alpha < \omega_1$) and the \emph{arithmetical hierarchy} ($\Sigma^0_n$, $\Pi^0_n$ for $n > 0$) are defined as usual. For a real $a \in 2^\omega$, a subset $A \subseteq 2^\omega$ is $\Sigma^0_n(a)$ if there is a $\Sigma^0_n$ formula $\phi(x)$ of the form $\exists m_1 \forall m_2 \cdots Q m_n R$ where all quantifiers range over $\omega$ and $R$ is a recursive predicate with the parameter $a$, such that $b \in A$ iff $\phi(b)$ holds for any $b \in 2^\omega$. In such a case, we say that $\phi$ \emph{defines} $A$. Similarly, a subset $A \subseteq 2^\omega$ is $\Pi^0_n(a)$ if $A$ is defined by a $\Pi^0_n$ formula with the parameter $a$. For example, the singleton set $\{a\}$ for a real $a \in 2^\omega$ is $\Pi^0_1(a)$ because it is defined by the formula $\phi(x) = \forall m (m \in x \leftrightarrow m \in a)$. We omit the precise definition of $\bm{\Sigma}^0_\alpha$ and $\bm{\Pi}^0_\alpha$, but note the following useful fact: for $n > 0$, a set $A \subseteq 2^\omega$ is $\bm{\Sigma}^0_n$ iff $A$ is $\Sigma^0_n(a)$ for some $a \in 2^\omega$, and similarly for $\bm{\Pi}^0_n$.

It is well-known, provably in ZFC, that Borel (even $\bm{\Sigma}^1_1$) sets have the \emph{perfect set property} -- thus the cardinality of a Borel set is either $\leq \aleph_0$ or $\conti$. We will exploit this fact to obtain the main results on the cardinality of classes of varieties, equational theories, and logics.

\begin{fact} \label{fact Borel}
    Every Borel set has cardinality either $\leq \aleph_0$ or $\conti$.
\end{fact}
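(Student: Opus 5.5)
We will prove the fact directly for Borel subsets of $2^\omega$, which is the setting of the paper. The standard route goes through analytic sets and proceeds in two steps: every Borel set is $\bm{\Sigma}^1_1$, and every uncountable $\bm{\Sigma}^1_1$ set contains a perfect subset. The conclusion then follows because a nonempty perfect subset of $2^\omega$ has cardinality exactly $\conti$. Indeed, a perfect set contains a homeomorphic copy of $2^\omega$, obtained by a Cantor scheme of splitting clopen neighbourhoods. It also has at most $\conti$ elements, since it is a subset of $2^\omega$. So a Borel set either is countable, giving cardinality $\leq\aleph_0$, or has cardinality exactly $\conti$.

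\textbf{Step 1: Borel sets are analytic.} Every Borel set $B$ is a continuous image of a closed subset of $\omega^\omega$, i.e.\ $B=p[T]$ for a tree $T$ on $2\times\omega$. We prove this by induction on the Borel rank.
\begin{itemize}
\item Closed and open sets: these are continuous images of $\omega^\omega$ or of closed subsets of it.
\item Countable unions: if $B_n=f_n[C_n]$, then $\bigcup_n B_n$ is the image of the disjoint sum $\bigsqcup_n C_n$, which is closed in $\omega^\omega$.
\item Countable intersections: $\bigcap_n B_n$ is the projection of the closed set $\{(x,(y_n)_n): \forall n\, (x,y_n)\in \tilde C_n\}$, where $\tilde C_n\subseteq 2^\omega\times\omega^\omega$ is closed with projection $B_n$. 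We identify $(\omega^\omega)^\omega$ with $\omega^\omega$.
\end{itemize}
Since the $\bm{\Sigma}^1_1$ sets contain the closed and open sets and are closed under countable unions and intersections, they contain all Borel sets.

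\textbf{Step 2: uncountable analytic sets contain a perfect subset.} This is the main step. Let $A=p[T]$ be uncountable. We run a Cantor–Bendixson style derivative on $T$: repeatedly prune every node $(s,t)\in T$ whose projection $p[T_{(s,t)}]$ is countable. Equivalently, we keep only the nodes through which uncountably many reals of $A$ pass. Only countably many points of $A$ are lost in this process, because there are countably many nodes, each contributing countably many points. Hence the pruned tree $T'$ is nonempty and still satisfies $p[T']\subseteq A$. In $T'$, every node has uncountably many $x$-projections above it, so we can find two extensions whose first coordinates are incompatible. Iterating this splitting, we build a Cantor scheme of nodes $(s_\sigma,t_\sigma)$ for $\sigma\in 2^{<\omega}$. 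Along each branch $\sigma\in 2^\omega$, the unions $\bigcup_n s_{\sigma\restriction n}$ and $\bigcup_n t_{\sigma\restriction n}$ give a pair $(x_\sigma,y_\sigma)$ that is a branch of $T'$, so $x_\sigma\in A$. The map $\sigma\mapsto x_\sigma$ is continuous and injective, hence its image is a perfect subset of $A$.

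The delicate point is ensuring that the splitting can always be continued. This requires the pruning to be done for all levels simultaneously, with the pruned tree being what remains after countably many steps rather than after a single pass. Once the pruned tree has the property that every node lies above uncountably many $x$-projections, the construction proceeds without obstruction.

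Since this fact is classical, the paper could instead cite the perfect set theorem for $\bm{\Sigma}^1_1$ sets from \cite[Section 12]{kanamori2008higher} or \cite[Chapter 11]{jechSetTheory2003}. The argument above is the proof I would reproduce if a self-contained treatment were wanted.
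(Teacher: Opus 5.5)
Your proposal is correct and takes the same route the paper indicates: Borel sets are $\bm{\Sigma}^1_1$, and $\bm{\Sigma}^1_1$ sets have the perfect set property; the paper itself does not prove this and simply cites it as classical from Kanamori and Jech. One small correction to your last paragraph: iteration is not required, since a single pass $T'=\{(s,t)\in T: p[T_{(s,t)}]\text{ uncountable}\}$ already works (the same countable-loss argument shows $p[T'_{(s,t)}]$ remains uncountable for every $(s,t)\in T'$), so your iterated pruning stabilizes after one step and this only simplifies the argument.
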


\subsection{Equational theories and logics}
We refer to \cite{ACourseInUniversalAlgebra1981,UniversalAlgebraFundamentals2011} for basic facts of universal algebra.
A \emph{language}, or \emph{signature}, is a set of function symbols with each arity specified. Throughout the paper, we assume that the language is countable. Given a language and a set $X$ of variables, we define \emph{terms} over $X$ in the language in the standard way. An \emph{equation}, or \emph{identity}, over $X$ is an expression of the form $s \approx t$ where $s$ and $t$ are terms over $X$. Following the convention, we assume $X$ to be a countably infinite set and omit it if no confusion arises. A set $\Phi$ of equations is an \emph{equational theory} if $\Phi$ is the set of equations valid in a variety $\V$, where a \emph{variety} is a class of algebras closed under homomorphic images, subalgebras, and products. We can take the following syntactic characterization of equational theories as our working definition of equational theories in this paper (see, e.g., \cite[Chapter 2, Definition 14.16 and Theorem 14.17]{ACourseInUniversalAlgebra1981}).

\begin{definition} \label{2: Def replacement substitution} \leavevmode
    \begin{enumerate}
        \item For a term $t$ and an equation $s \approx s'$, an equation $t \approx t'$ is a \emph{replacement} instance of $t$ and $s \approx s'$ if $t'$ is the result of replacing an occurrence of $s$ in $t$ by $s'$.
        \item For an equation $s \approx s'$ and a tuple of terms $(t_1, \dots, t_n)$, the \emph{substitution} instance of $s \approx s'$ and $(t_1, \dots, t_n)$ is the resulting equation by simultaneously replacing every occurrence of each variable $x_i$ in $s \approx s'$ by $t_i$.
    \end{enumerate}
\end{definition}

\begin{theorem} \label{2: Thm equational theory}
    Let $\Phi$ be a set of equations. Then $\Th(\Phi)$, the least equational theory containing $\Phi$, is the least set of equations containing $\Phi$ such that: 
    \begin{enumerate}
            \item $s \approx s \in \Th(\Phi) \text{ for } s \in \T, $
            \item $s \approx t \in \Th(\Phi)$ implies $t \approx s \in \Th(\Phi),$
            \item $s \approx t,~ t \approx u \in \Th(\Phi)$ implies $s \approx u \in   \Th(\Phi),$
            \item $\Th(\Phi)$ is closed under replacement,
            \item $ \Th(\Phi)$ is closed under substitution.
        \end{enumerate}
        
\end{theorem}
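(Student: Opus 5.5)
The plan is to prove the characterization by showing two inclusions: the least set $\Sigma$ of equations containing $\Phi$ and closed under (1)--(5) coincides with the set of equations valid in the variety $\mathrm{Mod}(\Phi)$ of all algebras satisfying $\Phi$. Since an equational theory is by definition the set of equations valid in some variety, this identifies $\Sigma$ with $\Th(\Phi)$. Any variety satisfying $\Phi$ is contained in $\mathrm{Mod}(\Phi)$, so its equational theory contains that of $\mathrm{Mod}(\Phi)$. It therefore suffices to show that $\Sigma$ is the equational theory of $\mathrm{Mod}(\Phi)$.

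\emph{Soundness.} I first check that the set $E$ of equations valid in $\mathrm{Mod}(\Phi)$ (indeed in any class of algebras) contains $\Phi$ and satisfies (1)--(5).
\begin{itemize}
\item Reflexivity, symmetry and transitivity are immediate, because validity means equality of term functions.
\item Closure under replacement follows by induction on the position of the replaced occurrence in $t$. Term functions are built by composing basic operations, so if $s^A = s'^A$, then $t^A = t'^A$.
\item Closure under substitution holds because evaluating $s(t_1,\dots,t_n)$ under an assignment $v$ equals evaluating $s$ under the assignment $x_i \mapsto t_i^A(v)$.
\end{itemize}
Hence $\Sigma \subseteq E$ by leastness.

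\emph{Completeness.} For the converse I build a model from $\Sigma$. Let $T$ be the term algebra over the countable variable set $X$, and define $s \theta t$ iff $s \approx t \in \Sigma$.
\begin{itemize}
\item By (1)--(3), $\theta$ is an equivalence relation.
\item By (4), $\theta$ is compatible with the operations. To get $f(s_1,\dots,s_k)\,\theta\, f(t_1,\dots,t_k)$, replace one argument at a time and chain the steps with transitivity.
\item By (5), $\theta$ is fully invariant: it is preserved by every endomorphism of $T$, since endomorphisms are exactly substitutions.
\end{itemize}
So $F = T/\theta$ is an algebra. Any assignment $X \to F$ lifts along the quotient map to an endomorphism $\sigma$ of $T$ followed by that map. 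Consequently $F$ satisfies $s \approx t$ iff $\sigma(s)\,\theta\,\sigma(t)$ for all substitutions $\sigma$, which by (5) holds iff $s \approx t \in \Sigma$. In particular $F \in \mathrm{Mod}(\Phi)$. Now if $s \approx t \in E$, then $F \models s\approx t$; taking $\sigma$ to be the identity gives $s\,\theta\, t$, i.e.\ $s\approx t\in\Sigma$. Thus $E = \Sigma$.

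The main obstacle is the bookkeeping in the completeness step. The first point is deriving full compatibility of $\theta$ from the single-occurrence replacement rule, which needs care with multiple argument positions. The second is verifying that assignments into $F$ correspond exactly to substitutions. Both become routine inductions on terms once the quotient construction is set up. Alternatively, one can cite \cite[Chapter 2, Theorem 14.17]{ACourseInUniversalAlgebra1981}, of which this is a restatement.
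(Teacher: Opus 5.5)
Your argument is correct. It is the standard proof (soundness, then completeness by quotienting the term algebra by the fully invariant congruence given by the closed set) of a result the paper does not prove itself but cites, \cite[Chapter~2, Definition~14.16 and Theorem~14.17]{ACourseInUniversalAlgebra1981}, and the one step you leave tacit is that $\mathrm{Mod}(\Phi)$ is a variety (identities are preserved under homomorphic images, subalgebras and products), which is what makes $E$ an equational theory in the paper's sense.
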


We also recall the basics of modal logic. We refer to \cite{czModalLogic1997,blackburnModalLogic2001} for a comprehensive introduction to this subject. Modal \emph{formulas} are defined by the following syntax, where $\Prop$ is a countable set of propositional variables:
\begin{align*}
    \phi ::= p \:|\: \bot \:|\: \phi \land \psi \:|\: \phi \to \psi \:|\: \Box \phi, \quad p \in \Prop
\end{align*}
A \emph{normal modal logic} $L$ is a set of formulas that contains all the propositional tautologies and the K axiom $\Box (p \to q) \to \Box p \to \Box q$, and is closed under modus ponens, necessitation, and uniform substitution. A logic $L$ is a \emph{normal extension}, or simply an \emph{extension}, of another logic $L_0$ if $L_0 \subseteq L$. For any logic $L_0$, the extensions of $L_0$ form a complete lattice, denoted $\NExt{L_0}$, where the order is the subset relation $\subseteq$. \emph{Tense logics} are defined similarly with two boxes $\Box_0$ and $\Box_1$. Tense logics also contain two extra axioms $p \to \Box_0 \lnot \Box_1 \lnot p$ and $p \to \Box_1 \lnot \Box_0 \lnot p$. We call normal modal logics and tense logics \emph{logics} if no confusion arises.

A \emph{Kripke frame} $\F$ is a pair $(F, R)$ of a nonempty set $F$ and a binary relation $R \subseteq F \times F$. A \emph{valuation} on $\F$ is a function $V: \Prop \to \pow{F}$. This generalizes to all formulas as follows: Boolean connectives are interpreted by Boolean operations on $\pow{F}$, and $V(\Box \phi) = \{x \in F: \forall y (Rxy \to y \in V(\phi))\}$. We say that $\F$ \emph{validates} $\phi$, written $\F \models \phi$, if $V(\phi) = F$ for any valuation $V$ on $\F$. Kripke semantics for tense logics is defined analogously, where the converse relation of $R$ is used to interpret the second modality.

\section{Main results} \label{Sec 3}

The core idea behind the proofs in this section is \emph{coding}. If the language $\L$ is countable, then there are only countably many equations. These equations can be effectively coded by natural numbers relative to an oracle for $\L$, corresponding to the arity map of the function symbols in $\L$. For instance, one may employ a standard G\"odel numbering based on prime factorization or a pairing function. Then, the set $\sf{Eq} = \{i \in \omega: \text{$i$ is a code of an equation in $\L$}\}$ is recursive in $\L$. Let $\phi_i$ denote the equation with code $i$. Equational theories $\Phi$ then correspond to particular kinds of elements $A \in 2^\omega$, i.e., infinite $0$-$1$ sequences, in such a way that $A(i) = 1$ iff $\phi_i \in \Phi$. These will be the reals coding set of sentences satisfying some basic properties, such as the inclusion of all identities $s \approx s$ and closure under substitutions.
In that case, we also call $A$ the code of $\Phi$. We will be loose on the distinction between an equational theory and its code if there is no confusion. 
As every equational theory has a unique code, instead of counting equational theories, we can count elements of $2^\omega$, i.e., reals, where descriptive set theory applies. If we can show that the set of reals corresponding to a class of equational theories is in a specific complexity class, such as Borel, we can draw from descriptive set theory consequences on the cardinality of that class of equational theories. Since in modal logic the language is finite, it falls under a special case of the above observation.

We start with intervals of equational theories. Let $\T$ and $\Eq$ respectively be the sets of codes of all terms and equations. Let $\Phi_0$ and $\Phi_1$ be equational theories. Recall that the \emph{interval} between $\Phi_0$ and $\Phi_1$ is the set 
\[[\Phi_1, \Phi_2] = \{\Phi: \text{$\Phi$ is an equational theory such that } \Phi_1\subseteq \Phi \subseteq \Phi_2\}.\]
An interval in this sense may not be linearly ordered. Since each equational theory corresponds to a real, we can view an interval of equational theories as a set of reals. So, it is meaningful to talk about the complexity hierarchy of an interval of equational theories. Recall that for $A, B \subseteq \omega$, the \emph{join} $A \oplus B \subseteq \omega$ is the set 
\[\{2n: n \in A\} \cup \{2n+1: n \in B\}.\]
Intuitively, using the parameter $A \oplus B$ amounts to using the parameters $A$ and $B$.

\begin{lemma} \label{Lem interval Pi01}
    The set $[\Phi_0, \Phi_1]$ is $\Pi^0_1(\Phi_0 \oplus \Phi_1)$. 
\end{lemma}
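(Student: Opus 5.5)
Following \Cref{2: Thm equational theory}, I will write a single $\Pi^0_1$ formula with parameter $\Phi_0 \oplus \Phi_1$ that defines $[\Phi_0,\Phi_1]$. A real $A$ lies in the interval iff three things hold: (a) $A \subseteq \Eq$ and $\Phi_0 \subseteq A \subseteq \Phi_1$; (b) $A$ contains $s\approx s$ for every term $s$; and (c) $A$ is closed under symmetry, transitivity, replacement and substitution. Since $\Phi_0$ is contained in $A$, the least equational theory containing $\Phi_0$ is contained in $A$ automatically. So I do not need to express ``least''; closure alone characterizes that $A$ is an equational theory, by \Cref{2: Thm equational theory} applied to $\Phi = A$, which gives $\Th(A)=A$.

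Each condition becomes a universal quantification over natural numbers (codes) of a recursive matrix in $A$, $\Phi_0$, $\Phi_1$ and the oracle for $\L$. I take $\L$ to be recursively coded; otherwise it is absorbed into the parameter, as the paper's coding discussion allows. The conditions read as follows.
\begin{itemize}
\item (a): $\forall i\,[(i\in A\to i\in\Eq)\wedge(i\in\Phi_0\to i\in A)\wedge(i\in A\to i\in\Phi_1)]$.
\item (b): $\forall i\,(i\in \T\to \ulcorner s_i\approx s_i\urcorner\in A)$, where $s_i$ is the term coded by $i$.
\item (c), symmetry and transitivity: $\forall i\forall j$, where the relevant syntactic operations (swapping sides, checking that the right side of one equation matches the left side of the other, forming the composite equation) are recursive.
\item (c), replacement: $\forall i\forall j\forall k$: if $k$ codes a term $t$, $i\in A$ codes $s\approx s'$, and $j$ codes an equation $t\approx t'$ that is a replacement instance, then $j\in A$.
\item (c), substitution: $\forall i\forall j\forall k$: if $i\in A$, $k$ codes a finite tuple of terms, and $j$ codes the substitution instance, then $j\in A$.
\end{itemize}
The conjunction of these universal formulas is equivalent to one $\Pi^0_1$ formula by contracting the quantifiers with a pairing function.

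The only step needing care is checking that the syntactic relations ``$j$ is a replacement instance of $t$ and $\phi_i$'' and ``$j$ is the substitution instance of $\phi_i$ and the tuple $k$'' are recursive, uniformly relative to the oracle for $\L$. This is routine: the numbering is effective, and both relations can be decided by parsing codes and performing bounded searches over term occurrences, whose number is bounded by the length of $t$. It is also important that the quantifiers range over the instances themselves, and that membership in $A$, $\Phi_0$, $\Phi_1$ appears only atomically. This keeps the matrix recursive in the parameters rather than introducing an existential quantifier. With this in place, the formula defines exactly $[\Phi_0,\Phi_1]$ and is $\Pi^0_1(\Phi_0\oplus\Phi_1)$, since queries to $\Phi_0$ and $\Phi_1$ are queries to the even and odd parts of the join.
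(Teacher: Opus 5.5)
Your argument follows the paper's proof: you list the closure conditions of \Cref{2: Thm equational theory} together with $\Phi_0 \subseteq A \subseteq \Phi_1$, and write each as a universal statement over codes whose matrix queries $A$, $\Phi_0$, $\Phi_1$ only atomically. The one place where you prove less than the lemma asserts is the parameter. You allow a separate oracle for $\L$ (``otherwise it is absorbed into the parameter''). For a language whose arity map is not recursive, this only gives that $[\Phi_0,\Phi_1]$ is $\Pi^0_1(\Phi_0\oplus\Phi_1\oplus\L)$. The lemma claims $\Pi^0_1(\Phi_0\oplus\Phi_1)$, with no mention of $\L$, and nothing in your proposal shows that $\L$ is recursive in $\Phi_0\oplus\Phi_1$.

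The missing observation, which the paper states explicitly, is that the oracle for $\L$ is redundant. Since $\Phi_0$ is an equational theory, it contains only codes of equations and contains $s \approx s$ for every term $s$. Hence a code $i$ lies in $\T$ iff the code of $s_i \approx s_i$, which is computable from $i$ without reference to $\L$, lies in $\Phi_0$. So $\T$ and $\Eq$ are recursive in $\Phi_0$, and so is the arity map: the arity of $f_n$ is the unique $k$ for which the code of $f_n(x_0,\dots,x_0)$ with $k$ arguments lies in $\T$. Consequently every predicate you called recursive relative to $\L$ is recursive in $\Phi_0$, and with this line added your formula really is $\Pi^0_1(\Phi_0\oplus\Phi_1)$. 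For the boldface application in \Cref{Lem interval} the discrepancy is harmless, since any real parameter suffices there.
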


\begin{proof}
    A set $\Phi \subseteq \omega$ is in $[\Phi_0, \Phi_1]$ iff each element in $\Phi$ indeed codes an equation, $\Phi$ is an equational theory, and $\Phi_1\subseteq \Phi \subseteq \Phi_2$. A set of equations is an equational theory iff it satisfies the five conditions in \Cref{2: Thm equational theory}. Thus, $\Phi \in [\Phi_0, \Phi_1]$ iff it satisfies all the following conditions:
    \begin{enumerate}
        \item $\Phi \subseteq \Eq$,
        \item $s \approx s \in \Phi \text{ for } s \in \T, $
        \item $s \approx t \in \Phi \Rightarrow t \approx s \in \Phi,$
        \item $s \approx t,~ t \approx u \in \Phi \Rightarrow s \approx u \in \Phi,$
        \item $\Phi \text{ is closed under replacement},$
        \item $\Phi \text{ is closed under substitution},$
        \item $\Phi_0 \subseteq \Phi \subseteq \Phi_1$.
    \end{enumerate}
    We show that all these conditions can be expressed by $\Pi^0_1$ formulas, with the parameters $\Phi_0$ and $\Phi_1$. We only address items (5) and (7). The others can be verified in a similar manner. First, observe that $\T$ and $\Eq$ are recursive in $\Phi_0$, since $\Phi_0$ serves as an oracle that takes any natural number as input (the same applies to $\Phi_1$). For instance, $t$ is a well-founded term iff $t \approx t \in \Phi_0$. Also, it follows from \Cref{2: Def replacement substitution} that the ternary relation ``$\phi'$ is a replacement instance of $t$ and $\phi$'' is recursive. So, using a recursive predicate $\sf{Rep}$, item (5) can be expressed by the $\Pi^0_1$ formula with the parameter $\Phi_0$ (hidden in $\T$):
    \[\forall i \forall j \forall k~(j \in \T \land k \in \Phi \land \sf{Rep}(i, j, k) \to i \in \Phi).\]
    Item (7) can be expressed by the $\Pi^0_1$ formula with parameters $\Phi_0$ and $\Phi_1$:
    \[\forall i~ [(i \in \Phi_0 \to i \in \Phi) \land (i \in \Phi \to i \in \Phi_1)].\]
    Thus, the set $[\Phi_0, \Phi_1]$ can be defined by a $\Pi^0_1$ formula with parameters $\Phi_0$ and $\Phi_1$, hence it is $\Pi^0_1(\Phi_0 \oplus \Phi_1)$.
\end{proof}

\begin{lemma} \label{Lem interval}
    For any equational theories $\Phi_0$ and $\Phi_1$, the interval $[\Phi_0, \Phi_1]$ has cardinality $\leq \aleph_0$ or $2^{\aleph_0}$. 
\end{lemma}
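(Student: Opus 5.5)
The plan is to combine \Cref{Lem interval Pi01} with \Cref{fact Borel}. We identify each equational theory with its code in $2^\omega$, as described at the start of this section. Since every equational theory has exactly one code and distinct theories have distinct codes, this identification is injective. Hence the cardinality of the interval $[\Phi_0, \Phi_1]$, viewed as a set of theories, equals the cardinality of the corresponding set of reals. It therefore suffices to show that this set of reals is Borel.

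By \Cref{Lem interval Pi01}, the set $[\Phi_0, \Phi_1]$ is $\Pi^0_1(\Phi_0 \oplus \Phi_1)$, where $\Phi_0 \oplus \Phi_1$ is a single real. By the fact recalled in \Cref{Sec 2}, a set is $\bm{\Pi}^0_1$ exactly when it is $\Pi^0_1(a)$ for some parameter $a \in 2^\omega$. Taking $a = \Phi_0 \oplus \Phi_1$ shows that $[\Phi_0, \Phi_1]$ is $\bm{\Pi}^0_1$, i.e.\ a closed subset of the Cantor space, and in particular Borel. Applying \Cref{fact Borel} then gives that its cardinality is either $\leq \aleph_0$ or $\conti$.

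The only point that needs care is the coding. The language is countable, so its arity map is itself coded by a real, and this real is recursive in $\Phi_0$, as noted in the proof of \Cref{Lem interval Pi01}. Consequently no parameter beyond $\Phi_0 \oplus \Phi_1$ is needed, and there is no real obstacle: the heavy lifting was done in \Cref{Lem interval Pi01} and in the perfect set property for Borel sets.
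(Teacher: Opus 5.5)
Your proposal is correct and matches the paper's proof: \Cref{Lem interval Pi01} makes the interval $\Pi^0_1(\Phi_0 \oplus \Phi_1)$, hence $\bm{\Pi}^0_1$ and Borel, and \Cref{fact Borel} gives the dichotomy. Your remarks on injectivity of the coding and on absorbing the arity map into the parameter are harmless but not essential, since any additional real parameter would still yield a boldface $\bm{\Pi}^0_1$ set.
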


\begin{proof}
    The interval $[\Phi_0, \Phi_1]$ is $\Pi^0_1(\Phi_0 \oplus \Phi_1)$ by \Cref{Lem interval Pi01}, so it is $\bm{\Pi}^0_1$, and hence Borel. Thus, it has cardinality $\leq \aleph_0$ or $2^{\aleph_0}$ by \Cref{fact Borel}.
\end{proof}

\begin{lemma} \label{Lem eq th}
    Every equational theory has $\leq \aleph_0$ or $2^{\aleph_0}$ many extensions. 
\end{lemma}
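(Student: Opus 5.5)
The plan is to reduce the statement to \Cref{Lem interval}. Given an equational theory $\Phi_0$, its extensions are exactly the equational theories $\Phi$ with $\Phi_0 \subseteq \Phi$. Every equation then lies in the set of all equations $\Eq$. So I will first check that $\Eq$ (the set of all codes of equations in the language $\L$) is itself an equational theory, namely the greatest one. This is the theory of the trivial variety, consisting of one-element algebras, in which every equation holds. Syntactically, $\Eq$ trivially satisfies all five closure conditions of \Cref{2: Thm equational theory}, since those conditions only require certain equations to belong to the set.

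Once this is done, every extension $\Phi$ of $\Phi_0$ satisfies $\Phi_0 \subseteq \Phi \subseteq \Eq$. Conversely, every member of the interval $[\Phi_0, \Eq]$ is an extension of $\Phi_0$. Hence the set of extensions of $\Phi_0$ coincides with the interval $[\Phi_0, \Eq]$. Applying \Cref{Lem interval} with $\Phi_1 = \Eq$ then gives that this set has cardinality $\leq \aleph_0$ or $2^{\aleph_0}$.

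I do not expect a real obstacle here. The only point requiring care is that $\Eq$ is a legitimate equational theory, and this is immediate from the syntactic characterization in \Cref{2: Thm equational theory}.
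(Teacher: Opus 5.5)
Your proposal is correct and is exactly the paper's argument: the paper also obtains the lemma from \Cref{Lem interval} by taking $\Phi_1$ to be the set of all equations. Your check that $\Eq$ is itself an equational theory (the theory of the trivial variety, trivially closed under the conditions of \Cref{2: Thm equational theory}) is a detail the paper leaves implicit.
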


\begin{proof}
    This follows from \Cref{Lem interval} by taking $\Phi_1$ to be the set of all equations.
\end{proof}

\begin{theorem}\label{TheoremVarieties}
    Every variety in a countable language has $\leq \aleph_0$ or $2^{\aleph_0}$ many subvarieties. 
\end{theorem}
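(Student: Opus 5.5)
The plan is to reduce \Cref{TheoremVarieties} to \Cref{Lem eq th} via the standard Galois correspondence between varieties and equational theories. Fix a variety $\V$ in a countable language, and let $\Phi_\V$ be the set of equations valid in $\V$. By definition, $\Phi_\V$ is an equational theory.

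The first step is to set up the map $\mathcal{W} \mapsto \Phi_{\mathcal{W}}$ from subvarieties $\mathcal{W} \subseteq \V$ to equational theories. We check three things about it.
\begin{enumerate}
\item It lands in the extensions of $\Phi_\V$: if $\mathcal{W} \subseteq \V$, then every equation valid in $\V$ is valid in $\mathcal{W}$, so $\Phi_\V \subseteq \Phi_{\mathcal{W}}$.
\item It is injective: by Birkhoff's HSP theorem, every variety is the class of models of its own equational theory, so $\mathcal{W} = \mathrm{Mod}(\Phi_{\mathcal{W}})$.
\item It is surjective onto the extensions of $\Phi_\V$: if $\Phi \supseteq \Phi_\V$ is an equational theory, then $\mathrm{Mod}(\Phi)$ is a variety contained in $\mathrm{Mod}(\Phi_\V) = \V$. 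Its equational theory is $\Phi$ itself, because an equational theory is the set of equations valid in some variety, and any such set is closed under the conditions of \Cref{2: Thm equational theory}, which axiomatize equational consequence.
\end{enumerate}
Hence the lattice of subvarieties of $\V$ is in bijection with (indeed dually isomorphic to) the set of extensions of $\Phi_\V$.

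The second step is to apply \Cref{Lem eq th} to $\Phi_\V$. It gives that $\Phi_\V$ has either $\leq \aleph_0$ or $2^{\aleph_0}$ many extensions. Transporting this across the bijection yields the theorem.

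No step is a real obstacle. The only care needed is in the correspondence, specifically that distinct subvarieties have distinct equational theories, which is exactly Birkhoff's theorem. I would cite it from \cite{ACourseInUniversalAlgebra1981} rather than reprove it.
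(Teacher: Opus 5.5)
Your proposal is correct and follows the paper's proof: the paper likewise uses Birkhoff's theorem to get the correspondence between subvarieties of $\V$ and extensions of its equational theory, and then transfers \Cref{Lem eq th} (and \Cref{Lem interval}) across it. The only difference is that you check injectivity, surjectivity and order-reversal of that correspondence explicitly, where the paper simply asserts it.
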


\begin{proof}
    By Birkhoff's theorem (see, e.g., \cite[Theorem 11.9]{ACourseInUniversalAlgebra1981} and \cite[Theorem 4.11]{UniversalAlgebraFundamentals2011}) that a class of algebras is a variety iff it is equationally definable, there is a one-to-one correspondence between varieties and equational theories. Thus, we can reformulate \Cref{Lem interval} and \Cref{Lem eq th} in terms of varieties: if the language is countable, then for any varieties $\V_0$ and $\V_1$, the interval $[\V_0, \V_1]$ has cardinality $\leq \aleph_0$ or $2^{\aleph_0}$; in particular, every variety has $\leq \aleph_0$ or $2^{\aleph_0}$ many subvarieties. 
\end{proof}

This answers the open question \cite[Question 6.4 (ii)]{jacksonMonoidVarietiesExtreme2018} in the affirmative. 

\begin{remark} \label{remark open 1}
Jackson and Lee also raised a second problem: Does every variety with uncountably many subvarieties have an independent system extending its equational theory \cite[Question 6.4 (i)]{jacksonMonoidVarietiesExtreme2018}? An \emph{independent system} extending an equational theory $\Phi$ is an infinite set $\Psi$ of equations such that every subset $\Psi' \subseteq \Psi$ induces a distinct equational theory $\Th(\Phi \cup \Psi')$. Note that a positive answer to this question would also yield the conclusion of Theorem \ref{TheoremVarieties}, as we can construct $\conti$ many equational theories from $\conti$ many subsets of $\Psi$. However, we provide a negative answer to this question as follows.

We refer to \cite{UniversalAlgebraFundamentals2011, ACourseInUniversalAlgebra1981} for the necessary background on universal algebra. It follows from \cite[Theorem 3.1]{jezekEquationalTheoriesSemilattices1990} that every completely distributive algebraic lattice with compact top is isomorphic to the lattice of extensions of an equational theory. Let $S = \mathbb{Q} \cap [0, 1]$ and $L$ be the set of all downsets in $S$ ordered by inclusion. Then $L$ is a complete chain with size $\conti$, in particular, a completely distributive lattice, where $\Lor = \bigcup$ and $\Land = \bigcap$. If, for a rational $q \in \mathbb{Q}$, the principal downset ${\downarrow} q \subseteq \bigcup_{i \in I} D_i$ for some $\{D_i\}_{i \in I} \subseteq L$, then $q \in D_i$ for some $i \in I$, and thus ${\downarrow} q \subseteq D_i$. So, each principal downset ${\downarrow} q$ for $q \in \mathbb{Q}$ is a compact element in $L$. In particular, the top element ${\downarrow} 1$ is compact. Moreover, every downset $D \in L$ is the union $\bigcup_{q \in D} {\downarrow} q$ of compact elements below $D$, so $L$ is an algebraic lattice. Thus, by \cite[Theorem 3.1]{jezekEquationalTheoriesSemilattices1990}, there is an equational theory $T$ such that $\Ext T$ is isomorphic to $L$. Then, $|\Ext T| = |L| = \conti$, while there are no equations $\phi, \psi$ such that $\Th(T + \phi)$ and $\Th(T + \psi)$ are incomparable, so there are no equations $\phi, \psi$ such that $\Th(T + \phi)$, $\Th(T + \psi)$, and $\Th(T + \{\phi, \psi\})$ are pairwise distinct, hence there is no independent system extending $T$. This yields a negative answer to \cite[Question 6.4 (i)]{jacksonMonoidVarietiesExtreme2018}. However, it follows from the proofs in \cite{jezekEquationalTheoriesSemilattices1990} that $T$ is in a countably infinite language, so the question remains open in the intended setting of Jackson and Lee (i.e., monoids).

Note that this example also shows that the cardinality result of \Cref{TheoremVarieties} cannot be proved by constructing independent systems, thus ruling out the standard methods of Jankov formulas and their variants. Such approaches would fail to show that the variety corresponding to $T$ has $\conti$ many subvarieties. This further explains why we had to employ the toolkit of descriptive set theory.

\end{remark}

Next, we present an application of our method to logics and study the \emph{degrees of the finite model property} (\emph{degrees of fmp} for short). The degrees of fmp were introduced in \cite{bezhanishvili2023degreesfinitemodelproperty} as a modified version of the degrees of Kripke incompleteness \cite{fineIncompleteLogicContainingS41974} using finite Kripke frames. We first recall the definition for modal logics; the case of superintuitionistic logics is analogous. Let $L_0$ be a logic.

\begin{definition} \label{Def degree fmp}
    Let $\FFr$ be the class of all finite Kripke frames. For $L \in \NExt{L_0}$, let \[\FFr(L) = \{\F \in \FFr: \F \vDash L\},\] 
    and the \emph{fmp span} of $L$ (in $\NExt{L_0}$) be the set
    \[\fmp_{L_0}(L) = \{L' \in \NExt{L_0}: \FFr(L') = \FFr(L)\}.\] 
    The \emph{degree of fmp} of $L$ (in $\NExt{L_0}$) is the cardinality of the set $\fmp_{L_0}(L)$.
\end{definition}

The condition $\FFr(L) = \FFr(L')$ induces an equivalence relation on the lattice $\NExt{L_0}$, and the fmp span of $L$ refers to the equivalence class that $L$ belongs to. Thus, intuitively, the degree of fmp of $L$ measures the extent to which $L$ cannot be distinguished from other logics by the means of finite Kripke frames. It was proved in \cite{bezhanishvili2023degreesfinitemodelproperty} that every cardinal $0 < \kappa \leq \aleph_0$ or $\kappa = \conti$ is realized as the degree of fmp for superintuitionistic logics and transitive modal logics. So, the question naturally arises whether any cardinal between $\aleph_0$ and $\conti$ is also realized as the degree of fmp for some logic. We show that this cannot be the case.

\begin{lemma} \label{Lem degree fmp Pi02}
        For any logic $L \in \NExt{L_0}$, the set $\fmp_{L_0}(L)$ is $\Pi^0_2 (L_0 \oplus L)$. 
\end{lemma}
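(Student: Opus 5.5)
We will write out the defining conditions of $\fmp_{L_0}(L)$ one by one, as in the proof of \Cref{Lem interval Pi01}, and check that each can be expressed by a $\Pi^0_2$ formula with the parameters $L_0$ and $L$. A real $X$ belongs to $\fmp_{L_0}(L)$ iff the following hold:
\begin{enumerate}
    \item $X$ is a normal modal logic;
    \item $L_0 \subseteq X$;
    \item $\FFr(X) = \FFr(L)$.
\end{enumerate}
Conditions (1) and (2) are $\Pi^0_1$ in the parameter $L_0$. For (1), the set of codes of formulas and the set of tautologies are recursive, and closure under modus ponens, necessitation and uniform substitution are each universally quantified statements with a recursive matrix, exactly as item (5) in \Cref{Lem interval Pi01}. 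Condition (2) is $\forall i\,(i \in L_0 \to i \in X)$.

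The substance lies in (3). First we code finite Kripke frames by natural numbers. A frame on $\{0,\dots,n-1\}$ is given by $n$ together with a relation, and every finite frame is isomorphic to one of these. The predicate ``$\F_k \models \phi_i$'' is then recursive, since validity on a finite frame only requires checking the finitely many valuations of the variables occurring in $\phi_i$. Next we rewrite $\F \models X$ as $\forall i\,(i \in X \to \F \models \phi_i)$, which is $\Pi^0_1$ in $X$, and $\F \not\models X$ as $\exists i\,(i \in X \land \F \not\models \phi_i)$, which is $\Sigma^0_1$. Condition (3) then becomes
\[\forall k\,\bigl[(\F_k \models L \to \F_k \models X) \land (\F_k \models X \to \F_k \models L)\bigr].\]
Unfolding the first conjunct gives $\forall k\,[\exists i\,(i \in L \land \F_k \not\models \phi_i) \lor \forall j\,(j \in X \to \F_k \models \phi_j)]$. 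Relative to the oracle $L$, the disjunct $\F_k\models L$ is a $\Pi^0_1(L)$ property of $k$. The conjunct as a whole is therefore $\forall k\,\exists i\,\forall j$, which is $\Pi^0_3$ as written.

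This quantifier count is the main obstacle. The naive expression is $\Pi^0_3$, while the lemma claims $\Pi^0_2$. The fix is to notice that the properties of $L$ itself are only parameters, so we may move them into the oracle. Since $L_0 \oplus L$ is a parameter, the set $S = \{k : \F_k \models L\}$ is a fixed subset of $\omega$, and $S$ need not be computable from $L$. We therefore expect to need a different route. For a finite rooted frame $\F_k$, validity of a normal logic containing $X$ is equivalent to a single formula: $\F_k \not\models X$ iff the Jankov--Fine style characteristic formula $\chi(\F_k)$ of $\F_k$ lies in $X$, where $\chi(\F_k)$ is recursive in $k$. Such formulas for arbitrary finite frames require a bounded depth of modalities, and the appropriate version is the one for $\sf K$ with a depth parameter; for general $L_0$ the existence of such a formula is exactly what the authors of \cite{bezhanishvili2023degreesfinitemodelproperty} use. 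If this works, $\F_k \not\models X$ becomes the $\Sigma^0_1$ statement $\exists m\,(m \in X \land \ldots)$ or even a recursive-in-$X$ statement. Condition (3) then reads $\forall k\,(k \in S \leftrightarrow \F_k \models X)$, with $\F_k \models X$ being $\Pi^0_1$ and $S$ used as an oracle.

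Even so, we would need $S$ to be recursive in $L_0 \oplus L$. To obtain this, we observe that $\F_k \models L$ iff $\F_k\models X$ for every $X$ with $\FFr(X)=\FFr(L)$. Failing that, we would prove directly that $\F_k \not\models L$ iff some formula recursively attached to $k$ (a Jankov-type formula $\chi(\F_k)$) lies in $L$, which makes $S$ recursive in $L$. Assuming this step, $\F_k \models X$ is $\Pi^0_1(X)$ and $\F_k\not\models X$ is $\Sigma^0_1(X)$. Condition (3) then becomes $\forall k\,[(k\notin S \lor \forall j\,(\ldots)) \land (k \in S \lor \exists i\,(\ldots))]$, which is $\forall k\,\exists i\,\forall j$ again. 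To improve this we would use the fact that $\F_k\models X$ is equivalent to $\chi(\F_k)\notin X$, which is recursive in $X$. Condition (3) is then $\Pi^0_1$, and hence certainly $\Pi^0_2(L_0\oplus L)$. So the key step, and the one we expect to be hardest, is establishing this single-formula characterization of refutation of finite frames.
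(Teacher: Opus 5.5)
\textbf{There is a genuine gap.} It starts with a quantifier miscount, which creates an obstacle that does not exist. Consider $\forall k\,[\exists i\,(i \in L \land \F_k \not\models \phi_i) \lor \forall j\,(j \in X \to \F_k \models \phi_j)]$. The variables $i$ and $j$ occur in different disjuncts, so you may prenex with $\forall j$ outermost. This gives $\forall k\,\forall j\,\exists i\,[\ldots]$, which is $\Pi^0_2(L)$, not $\Pi^0_3$. The second conjunct is handled the same way. The same slip recurs in your version with $S$ as an oracle, which is also of the form $\forall k\,\forall j\,\exists i$.

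Conceptually, $\F_k \models X$ and $\F_k \models L$ are $\Pi^0_1$ in $X$ and in $L$ respectively. Their biconditional is therefore a Boolean combination of $\Sigma^0_1$ predicates, hence $\Delta^0_2$. A universal quantifier over frame codes keeps it $\Pi^0_2$. This is exactly the paper's argument: $\fmp_{L_0}(L)$ is defined by $\alpha \land \forall f\,[f \in \FFr \to ((\forall i \in L'\,\Val(f,i)) \leftrightarrow (\forall j \in L\,\Val(f,j)))]$, where $\alpha$ is the $\Pi^0_1(L_0)$ definition of $\NExt{L_0}$. You never need $S$ to be recursive in $L$.

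\textbf{The workaround is false in the generality of the lemma.} You want a formula $\chi(\F)$ with $\F \not\models X$ iff $\chi(\F) \in X$ for all $X \in \NExt{L_0}$. That says exactly that $\F$ splits $\NExt{L_0}$. Jankov--Fine formulas give this for finite rooted frames over $\mathsf{K4}$. Over $\mathsf{K}$, however, only cycle-free finite rooted frames are splitting (Blok).

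A concrete counterexample is the single reflexive point $\circ$:
\begin{itemize}
\item Since $\circ \not\models \Box^n\bot$ for every $n$, the formula $\chi(\circ)$ would have to lie in every $\mathsf{K} + \Box^n\bot$.
\item The intersection of these logics is $\mathsf{K}$. Every non-theorem of $\mathsf{K}$ is refuted on a finite irreflexive intransitive tree, and such a tree validates some $\Box^n\bot$.
\item Hence $\chi(\circ) \in \mathsf{K}$, contradicting $\circ \models \mathsf{K}$.
\end{itemize}
(The lemma is stated for arbitrary $L_0$, and the cited source only treats transitive and superintuitionistic logics.)

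Since your final step is explicitly left as ``assuming this step,'' the proposal does not prove the lemma as written. The repair is simply to keep your first, naive formula and count its quantifiers correctly.
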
 
\begin{proof}
    By \Cref{Lem interval Pi01}, there is a $\Pi^0_1$ formula $\alpha$ with the parameter $L_0$ that defines $\NExt{L_0}$. 
    
    A finite Kripke frame is a finite set with a binary relation. Given a finite Kripke frame $\F$ and a formula $\phi$, it is decidable whether $\F \models \phi$ by considering all possible valuations on $\F$ of propositional variables occurring in $\phi$. So, finite Kripke frames (up to isomorphism) can be recursively coded by natural numbers such that:  the validity relation 
    \begin{align*}
        \Val(f, i) \text{ iff } &\text{$f$ is the code of a finite Kripke frame $\F$ and} \\
         &\text{$i$ is the code of a formula $\phi$ and } \F \models \phi
    \end{align*}
    is recursive.

    For any $L \in \NExt{L_0}$ and $L' \subseteq \omega$, we have $L' \in \fmp_{L_0}(L)$ iff $L'$ satisfies $\alpha$ and the formula
    \[\beta = \forall f [f \in \FFr \to [(\forall i \in L' ~ \Val(f, i)) \leftrightarrow (\forall j \in L ~ \Val(f, j))]],\] 
    which is readily verified to be a $\Pi^0_2$ formula with the parameter $L$. Thus, the set $\fmp_{L_0}(L)$ is defined by $\alpha \land \beta$, which is a $\Pi^0_2$ formula with the parameters $L_0$ and $L$. Hence, the set $\fmp_{L_0}(L)$ is $\Pi^0_2(L_0 \oplus L)$.
\end{proof}

\begin{theorem} \label{Thm degree fmp}
    For any normal modal logic $L_0$ and $L \in \NExt{L_0}$, the degree of fmp of $L$ in $\NExt{L_0}$ is either $\leq \aleph_0$ or $2^{\aleph_0}$.
\end{theorem}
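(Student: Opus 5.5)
The plan is to combine \Cref{Lem degree fmp Pi02} with \Cref{fact Borel}, in the same way \Cref{Lem interval} was derived from \Cref{Lem interval Pi01}.

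First, I fix the coding of modal formulas by natural numbers. The modal language is finite and $\Prop$ is countable, so formulas are recursively coded and every logic $L \in \NExt{L_0}$ corresponds to a unique real in $2^\omega$. Distinct logics are distinct sets of formulas, so this assignment is injective. The degree of fmp of $L$ is therefore the cardinality of the set of reals coding the members of $\fmp_{L_0}(L)$.

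Second, I apply \Cref{Lem degree fmp Pi02}, which says that this set of reals is $\Pi^0_2(L_0 \oplus L)$. By the fact recalled in \Cref{Sec 2}, a $\Pi^0_2(a)$ set is $\bm{\Pi}^0_2$, and in particular Borel. \Cref{fact Borel} then gives that its cardinality is either $\leq \aleph_0$ or $\conti$, which is the claim.

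There is essentially no obstacle, since all the work sits in \Cref{Lem degree fmp Pi02}. The one point to check is that \Cref{Lem interval Pi01} really transfers to normal modal logics: the set $\NExt{L_0}$ must be $\Pi^0_1(L_0)$. Modus ponens, necessitation and uniform substitution are recursive closure conditions of the form ``for all codes, if the premises are in the set then the conclusion is''. Containing $L_0$ is a $\Pi^0_1(L_0)$ condition. The tautologies and the K axiom are already contained in $L_0$, so they add nothing. All these conditions are $\Pi^0_1(L_0)$, as in the equational case.
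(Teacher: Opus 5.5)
Your proposal is correct and follows the paper's proof: \Cref{Lem degree fmp Pi02} gives that $\fmp_{L_0}(L)$ is $\Pi^0_2(L_0 \oplus L)$, hence $\bm{\Pi}^0_2$ and Borel, and \Cref{fact Borel} finishes the argument. Your extra check that $\NExt{L_0}$ is $\Pi^0_1(L_0)$ (closure under modus ponens, necessitation and uniform substitution, plus containing $L_0$) usefully verifies a step the paper takes for granted when it cites \Cref{Lem interval Pi01}; you should also include the recursive condition that every element of the set codes a formula, as in item (1) of that lemma.
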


\begin{proof}
    This follows from \Cref{Lem degree fmp Pi02} analogously to the proof of \Cref{Lem interval}.
\end{proof}

\begin{remark} \label{remark degree si}
    It is straightforward to modify the proofs of \Cref{Lem degree fmp Pi02} and \Cref{Thm degree fmp} and prove similar results for superintuitionistic logics.
\end{remark}

\Cref{Thm degree fmp} shows that we can obtain complete antidichotomy results for the degrees of fmp without using the Continuum Hypothesis, answering the question posed in \cite[Section 8 (1)]{bezhanishvili2023degreesfinitemodelproperty}.

\begin{remark}
    It was observed in \cite{bezhanishvili2023degreesfinitemodelproperty} that for transitive modal and superintuitionistic logics, the fmp span is always an interval. So, in fact, \Cref{Lem interval}, applied to transitive modal and superintuitionistic logics, already answers question \cite[Section 8 (1)]{bezhanishvili2023degreesfinitemodelproperty}. We still provided a direct proof, since we believe it is more informative. Moreover, our result applies to all modal logics, whereas the fact that the fmp span is an interval was shown in \cite{bezhanishvili2023degreesfinitemodelproperty} only for transitive modal and superintuitionistic logics. 
\end{remark}

Finally, we apply our method to study classes of tense logics that are not intervals.  A tense logic $L$ is \emph{tabular} if it is characterized by a single finite Kripke frame, that is, there is a finite Kripke frame $\F$ such that for any formula $\phi$, $\phi \in L$ iff $\F \models \phi$. A tense logic is \emph{pretabular} if it is not tabular while all its proper consistent extensions are tabular. See \cite[Chapter 12]{czModalLogic1997} for details on tabular and pretabular logics. For a tense logic $L$, let $\sf{PTAB}(L)$ be the set of pretabular extensions of $L$.

Recently, Chen \cite{chen2025pretabulartenselogicss4t} showed that for each cardinal $\kappa \leq \aleph_0$ and $\kappa = \conti$, there is a tense logic $L$ extending $\sf{S4}_t$ such that $L$ has $\kappa$ many pretabular extensions. Given this kind of antidichotomy theorem, similar to the case of the degree of fmp, it is natural to ask if there is a tense logic that has $\kappa$ many pretabular extensions for $\aleph_0 < \kappa < \conti$. Contrary to intervals and fmp spans, by naively interpreting the definition of pretabularity, we would obtain a rather high upper bound for the complexity of $\sf{PTAB}(L)$, namely $\bm{\Pi}^1_1$, as the definition contains a universal quantification over logics. It is consistent with Zermelo-Frankel set theory that there exists a $\bm{\Pi}^1_1$ set $A$ whose cardinality satisfies $\aleph_0 < |A| < 2^{\aleph_0}$. For instance, this will hold in Cohen's \cite{Co63} original model for the negation of the Continuum Hypothesis, obtained by forcing over G\"odel's constructible universe $L$. Let $A$ be the set of all real numbers $x\in\mathbb{R}$ such that $x \in L_\alpha$ for some $x$-computable ordinal $\alpha$, where $L_\alpha$ denotes the $\alpha$th stage of G\"odel's constructible hierarchy. In this model, $A$ satisfies $|A| = \aleph_1 < \aleph_2 = 2^{\aleph_0}$. 

However, a more sophisticated characterization can bring the complexity down to $\bm{\Pi}^0_2$, where \Cref{fact Borel} applies. The following fact was proved in \cite[Lemma 11]{chagrovAlgorithmicAspectsPropositional1995} and \cite[Theorem 3.7]{chenTABULARITYPOSTCOMPLETENESSTENSE2024}.

\begin{fact} \label{fact tense tab}
    There is a set of formulas $\{\tab_n: n \in \omega\}$ such that for any consistent tense logic $L'$, we have that $L'$ is tabular iff $\tab_n \in L'$ for some $n \in \omega$. Moreover, the codes of $\tab_n$'s are computable from $n$.
\end{fact}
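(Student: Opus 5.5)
The plan is to write down $\tab_n$ explicitly as a formula that bounds the size of connected components, where connectivity is taken with respect to $R \cup R^{-1}$. For the converse direction I will argue in the canonical model of $L'$. Set $D\phi := \phi \lor \lnot\Box_0\lnot\phi \lor \lnot\Box_1\lnot\phi$ and let $D^k$ denote its $k$-fold iterate. In a Kripke frame, $x \models D^k\phi$ iff some point at undirected distance $\leq k$ from $x$ satisfies $\phi$. Define
\[
\tab_n := (D^{n+1}q \to D^n q) \land \lnot \Land_{i=1}^{n+1} D^n\Bigl(p_i \land \Land_{j\neq i}\lnot p_j\Bigr).
\]
Both conjuncts are produced by an obvious recursive procedure from $n$, so the codes of the $\tab_n$ are computable from $n$.

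Frame correspondence. The first conjunct holds on a frame iff every point reachable along an undirected path is reachable within $n$ steps. The second conjunct holds iff no point sees $n+1$ distinct points within distance $n$; for the nontrivial direction, choose a valuation that separates those points by $p_1,\dots,p_{n+1}$. So a frame validates $\tab_n$ iff each of its connected components has at most $n$ points.

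Tabular implies some $\tab_n \in L'$. If $L'$ is the logic of a finite frame $\F$ with $|F| = n$, then every component of $\F$ has at most $n$ points. Hence $\F \models \tab_n$, and so $\tab_n \in L'$.

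Some $\tab_n \in L'$ and $L'$ consistent implies tabular. Let $\mathcal{M}$ be the canonical model of $L'$ with frame $(W,R)$; $W \neq \emptyset$ because $L'$ is consistent. I will show that every connected component $C$ of $(W,R)$ has at most $n$ points. Note that every instance of $\tab_n$ belongs to $L'$ by substitution. Suppose $x,y \in C$ lie at distance $n+1$ via some path. Pick a formula $\psi$ true at $y$ and false at every point within distance $n$ of $x$. Compactness and the truth lemma for $D$ give such a $\psi$ if no such point equals $y$. Then $D^{n+1}\psi \land \lnot D^n\psi$ holds at $x$, contradicting the substitution instance of the first conjunct. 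So every point of $C$ lies within distance $n$ of $x$. Now suppose $C$ contains $n+1$ distinct points $w_1,\dots,w_{n+1}$. Choose formulas $\psi_i$ with $\psi_i \in w_i$ and $\psi_i \notin w_j$ for $j \neq i$; these exist by finitely many pairwise separations. Substituting $\psi_i$ for $p_i$ in the second conjunct yields a theorem of $L'$ that is false at $x$, a contradiction.

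Thus every component of the canonical frame is a frame with at most $n$ points, and up to isomorphism there are finitely many such frames. Since the canonical model refutes every non-theorem, $L'$ is contained in the logic of the finite disjoint union $\F$ of one copy of each component type that occurs. Conversely, each of these components is a generated subframe of the canonical frame, with generated taken in the tense sense. Validity transfers from the canonical frame to generated subframes, and the canonical frame validates $L'$ at the level of the model under all substitution instances. The clean way to secure this transfer is to note that, on frames of size $\le n$ with points separated by formulas, every valuation is definable, so the canonical model's truth coincides with frame validity. Hence $L' = $ the logic of $\F$.

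The step I expect to be the main obstacle is this last one: justifying that the finite components of the canonical frame actually validate $L'$. I would first try the definability-of-valuations argument. If that is shaky, I would fall back on the known fact that a logic all of whose canonical-model components are finite is the logic of those finite frames, using that finite descriptive (hence Kripke) frames validate $L'$.
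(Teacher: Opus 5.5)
The paper gives no proof of this fact. It imports it from \cite[Lemma 11]{chagrovAlgorithmicAspectsPropositional1995} and \cite[Theorem 3.7]{chenTABULARITYPOSTCOMPLETENESSTENSE2024}, so there is no in-paper argument to match. Your proposal is a correct, self-contained canonical-model proof. The explicit $\tab_n$ defines the frame condition that every $(R\cup R^{-1})$-component has at most $n$ points. Your two separation arguments show that the canonical frame of a consistent $L'\ni\tab_n$ satisfies this condition; this uses the standard fact that the two canonical relations of a tense logic are mutually converse. Finitely many finite components then characterize $L'$. Compared with a bare citation, your route makes the computability clause, which is what \Cref{Lem pretabular Pi02} actually uses, evident from the shape of the formula. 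It also adapts directly to unimodal logics, the variant invoked in the conclusion: take $D\phi:=\phi\lor\lnot\Box\lnot\phi$ and use point-generated subframes instead of components.

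Three points to tidy up.

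First, ``the canonical model refutes every non-theorem'' yields $\mathrm{Log}(\F)\subseteq L'$, not $L'\subseteq\mathrm{Log}(\F)$. Your ``conversely'' paragraph is what gives $L'\subseteq\mathrm{Log}(\F)$.

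Second, drop the phrase about validity transferring from the canonical frame, since the canonical frame need not validate $L'$. The definability argument you propose is already rigorous, so no fallback is needed. Let $C=\{w_1,\dots,w_k\}$ with $k\le n$, and pick $\delta_i\in w_i$ with $\delta_i\notin w_j$ for $j\ne i$. For a valuation $V$ on $C$, set $\chi_l:=\Lor_{w_i\in V(p_l)}\delta_i$ (read as $\bot$ if empty). Since $C$ is closed under $R$ and $R^{-1}$, truth at points of $C$ is the same in $C$ and in the canonical model. Hence $(C,V)\models\phi$ for every $\phi\in L'$, because $\phi(\chi_1,\dots,\chi_m)\in L'$ lies in every $w_i$.

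Third, the ``compactness'' step deserves one line. With $B:=\lnot D\lnot$, the canonical relation of $B$ is $\mathrm{id}\cup R\cup R^{-1}$, and that of $B^n$ is its $n$-fold composite. So if $y$ is not within distance $n$ of $x$, there is $\phi$ with $B^n\phi\in x$ and $\phi\notin y$, and $\psi:=\lnot\phi$ is the formula you need.
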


\begin{lemma} \label{Lem tense pretabular fin ax}
    For any tense logic $L$, we have that $L$ is pretabular iff it is not tabular while all its proper consistent \emph{finitely axiomatizable} extensions are tabular.
\end{lemma}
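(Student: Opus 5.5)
The forward direction is immediate: if $L$ is pretabular, then $L$ is not tabular and every proper consistent extension of $L$ is tabular, in particular every finitely axiomatizable one. All the work is in the converse. Assume $L$ is not tabular and every proper consistent finitely axiomatizable extension of $L$ is tabular. Let $L'$ be an arbitrary proper consistent extension of $L$; we want to show that $L'$ is tabular.

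Since $L \subsetneq L'$, pick a formula $\phi \in L' \setminus L$ and let $L'' = L \oplus \phi$ be the least tense logic containing $L$ and $\phi$. Then $L \subsetneq L'' \subseteq L'$, and $L''$ is consistent because $L'$ is. Here "finitely axiomatizable extension" should be read relative to $L$, i.e.\ of the form $L \oplus \psi$ for a single formula $\psi$. If the intended reading is absolute finite axiomatizability, we need a separate argument; I address that below. Under the relative reading, the hypothesis gives that $L''$ is tabular. By \Cref{fact tense tab} there is $n$ with $\tab_n \in L''$, so $\tab_n \in L'$. Since $L'$ is consistent, \Cref{fact tense tab} applied to $L'$ shows that $L'$ is tabular, as required.

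The only subtle point is to make sure the reading of ``finitely axiomatizable extension'' matches what is needed. If it means finitely axiomatizable over $L$, the argument above is complete. If it means finitely axiomatizable outright, and $L$ itself is not finitely axiomatizable, then $L \oplus \phi$ need not be finitely axiomatizable, and we need a different argument. In that case I would use the structure of \Cref{fact tense tab}. We again need some $\tab_n$ in $L'$. Suppose, toward a contradiction, that no $\tab_n$ lies in $L'$. Consider the logics $L_k = \mathsf{K}_t \oplus \{\text{first $k$ axioms of } L\} \oplus \phi$. This fails because the $L_k$ are not extensions of $L$.

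So the intended reading must be relative to $L$, which is how the lemma will be used. In the later $\bm{\Pi}^0_2$ characterization we quantify over single formulas $\phi$ and ask whether some $\tab_n$ lies in $L \oplus \phi$; that condition is $\Sigma^0_1$ in $L$, since derivability from $L \cup \{\phi\}$ is $\Sigma^0_1$ relative to $L$. I therefore expect the main obstacle to be only this definitional point. The key mathematical input, that tabularity is witnessed by a single formula $\tab_n$ and persists upward to consistent extensions, is supplied by \Cref{fact tense tab}.
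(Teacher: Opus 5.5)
Your proof is correct and is essentially the paper's: take $\phi \in L' \setminus L$, observe that $L \oplus \phi$ is a proper extension that is consistent because it lies inside $L'$, hence tabular. It therefore contains some $\tab_n$ by \Cref{fact tense tab}, and this $\tab_n$ then makes $L'$ tabular. The paper runs the same argument as a proof by contradiction and likewise reads ``finitely axiomatizable'' relative to $L$ (it applies the hypothesis to $L+\phi$), so you can drop the abandoned digression about the absolute reading.
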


\begin{proof}
    It suffices to prove the right-to-left direction, as the other direction is clear by the definition. Assume for a contradiction that $L$ is not pretabular, $L$ is not tabular, and all its consistent finitely axiomatizable extensions are tabular. Then, $L$ has some proper consistent non-tabular extension $L'$, which is not finitely axiomatizable. So, there is some $\phi \in L'$ such that $\phi \notin L$. Since the logic $L + \phi$ is a proper consistent extension of $L$, it is tabular, and thus contains $\tab_n$ for some $n \in \omega$ by \Cref{fact tense tab}. It follows that $\tab_n \in L'$, which contradicts the assumption that $L'$ is not tabular. Therefore, we conclude the right-to-left direction of the statement.
\end{proof}

\begin{lemma} \label{Lem pretabular Pi02}
    For any tense logic $L$, the set $\sf{PTAB}(L)$ is $\Pi^0_2(L)$. 
\end{lemma}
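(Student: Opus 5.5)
The plan is to write down an explicit $\Pi^0_2$ formula with parameter $L$ that defines $\sf{PTAB}(L)$, splitting pretabularity via \Cref{Lem tense pretabular fin ax} into three pieces: being an extension of $L$, being consistent and non-tabular, and the condition on finitely axiomatizable extensions. I would treat the three pieces separately and then take their conjunction.

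\textbf{Extensions of $L$.} Exactly as in \Cref{Lem interval Pi01}, the set of tense logics extending $L$ is $\Pi^0_1(L)$. The set of codes of formulas is recursive, and so is the set of tautologies together with the K axioms and the two tense axioms. Closure under modus ponens, necessitation for $\Box_0$ and $\Box_1$, and uniform substitution is expressed by universal quantifiers over recursive relations. Finally, $L \subseteq L'$ is $\forall i\,(i \in L \to i \in L')$.

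\textbf{Consistency and non-tabularity.} I take pretabular logics to be consistent, since the inconsistent logic would otherwise satisfy the definition vacuously, so I add the $\Pi^0_1$ clause ``$\bot \notin L'$''. For a consistent $L'$, \Cref{fact tense tab} says non-tabularity is equivalent to $\forall n\,(\tab_n \notin L')$. This clause is $\Pi^0_1$ because the code of $\tab_n$ is computable from $n$.

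\textbf{Finitely axiomatizable extensions.} By \Cref{Lem tense pretabular fin ax}, it suffices to quantify over finitely axiomatizable extensions. Since a finite axiom set can be replaced by the conjunction of its members, these are exactly the logics $L' + \phi$ for a single formula $\phi$. The relation ``$\psi \in L' + \phi$'' is $\Sigma^0_1(L')$: it holds iff there is a finite derivation whose leaves are members of $L'$ or substitution instances of $\phi$, and whose steps are modus ponens, necessitation, or substitution. Checking that a coded finite object is such a derivation is recursive relative to the oracle $L'$, so $\psi \in L' + \phi$ is $\exists d\, R(d,\psi,\phi,L')$ with $R$ recursive. The condition then reads
\[\forall \phi\,\bigl[\phi \in L' \lor \bot \in L'+\phi \lor \exists n\,(\tab_n \in L'+\phi)\bigr].\]
Here the first disjunct handles non-proper extensions, the second handles inconsistent ones, and the third says $L' + \phi$ is tabular, using \Cref{fact tense tab} again. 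Each disjunct is $\Sigma^0_1(L')$, so the bracket is $\Sigma^0_1$ and the whole clause is $\Pi^0_2$, with $L'$ as the variable being defined.

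\textbf{Main obstacle.} The delicate step is the last clause. One must see that passing from $L'$ to $L'+\phi$ only costs an existential quantifier over derivations, so that the quantifier ``for all logics'' in the definition of pretabularity collapses to a number quantifier. This is exactly where \Cref{Lem tense pretabular fin ax} is needed. The conjunction of the three clauses is a $\Pi^0_2$ formula with parameter $L$, so $\sf{PTAB}(L)$ is $\Pi^0_2(L)$.
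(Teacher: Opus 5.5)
Your proposal is correct and follows the paper's proof essentially verbatim. The paper also defines $\sf{PTAB}(L)$ as a conjunction of three clauses:
\begin{itemize}
\item the $\Pi^0_1(L)$ formula defining $\NExt{L}$;
\item the clause $\forall n\,(\tab_n \notin L')$;
\item the clause $\forall \phi\,[\phi \in L' \lor \exists n\,(\tab_n \in L'+\phi) \lor \bot \in L'+\phi]$, where membership in $L'+\phi$ is an existential quantifier over proofs, recursive in the oracle $L'$.
\end{itemize}
Your extra clause $\bot \notin L'$ is harmless but redundant, since $\forall n\,(\tab_n \notin L')$ already excludes the inconsistent logic.
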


\begin{proof}
    Similar to the proof of \Cref{Lem degree fmp Pi02}, we can code formulas in tense logic and finite Kripke frames so that the validity relation is recursive. We use the same notation $\Fml$ for the set of formulas. Also, by the proof of \Cref{Lem interval Pi01}, there is a $\Pi^0_1$ formula $\alpha(X)$ with the parameter $L$ that defines $\NExt{L}$. Let $f$ be the recursive function computing the code of $\tab_n$ in \Cref{fact tense tab} from $n$. 
    
    Tense logics have the same Hilbert-style proof system as bimodal logics, naturally extending that of unimodal logics (see, e.g., \cite[Section 3.6]{czModalLogic1997} and \cite[Section 1.6]{blackburnModalLogic2001}). Thus, proofs in tense logics can be coded in a standard way, yielding a recursive predicate $\sf{Proof}$ such that $\sf{Proof}(A, i, p, j)$ iff $p$ is the code of a proof of the formula $\phi_j$ in the logic $L_A + \phi_i$.

    By \Cref{Lem tense pretabular fin ax}, for any tense logic $L'$, we have that $L' \in \sf{PTAB(L)}$ iff $L' \in \NExt{L}$, $L'$ is not tabular, and for any formula $\phi$, either $L' + \phi = L'$, $L' + \phi$ is tabular, or $L' + \phi$ is the inconsistent logic, translating into the following formula:
    \[\alpha(L') \land \forall n (f(n) \notin L') \land \forall i \in \Fml [ i \in L' \lor \exists p \exists n  \sf{Proof}(L', i, p, f(n)) \lor \exists p \sf{Proof}(L', i, p, b)],\]
    where $b$ is the code of $\bot$. Since this formula is $\Pi^0_2$ with the parameter $L$, the set $\sf{PTAB}(L)$ is $\Pi^0_2(L)$. 
\end{proof}

\begin{theorem} \label{Thm tense pretab}
    For any tense logic $L$, the cardinality of the set of pretabular extensions of $L$ is either $\leq \aleph_0$ or $\conti$.
\end{theorem}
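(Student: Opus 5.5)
The plan is to derive the theorem directly from \Cref{Lem pretabular Pi02}, in the same way as \Cref{Lem interval} was obtained from \Cref{Lem interval Pi01} and \Cref{Thm degree fmp} from \Cref{Lem degree fmp Pi02}. Fix a tense logic $L$ and identify it with its code, a real in $2^\omega$. Each tense logic has a unique code, so the map sending a logic to its code is a bijection from $\sf{PTAB}(L)$ onto the corresponding set of reals. It therefore suffices to count the latter set.

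First, \Cref{Lem pretabular Pi02} shows that $\sf{PTAB}(L)$ is $\Pi^0_2(L)$. By the fact recalled in \Cref{Sec 2}, a set is $\bm{\Pi}^0_n$ iff it is $\Pi^0_n(a)$ for some real $a$. Taking $a = L$, we conclude that $\sf{PTAB}(L)$ is $\bm{\Pi}^0_2$, and hence Borel. Second, \Cref{fact Borel} then gives that $\sf{PTAB}(L)$ has cardinality either $\leq \aleph_0$ or $\conti$. Transferring this back along the coding bijection yields the statement.

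No real obstacle remains at this stage, since all the work was done in \Cref{Lem tense pretabular fin ax} and \Cref{Lem pretabular Pi02}. The one point I would double-check is that the formula in \Cref{Lem pretabular Pi02} is genuinely $\Pi^0_2$. The bounded quantifier $\forall i \in \Fml$ and the conjunct $\forall n\,(f(n) \notin L')$ are universal, and inside the bracket there are only existential quantifiers over a recursive matrix, since $\sf{Proof}$ and $f$ are recursive relative to $L'$ and $L$. Together with the $\Pi^0_1$ formula $\alpha$, the whole formula is therefore $\forall\exists$, as required.
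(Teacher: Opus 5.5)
Your proposal is correct and is exactly the paper's argument: \Cref{Lem pretabular Pi02} shows that $\sf{PTAB}(L)$ is $\Pi^0_2(L)$, hence $\bm{\Pi}^0_2$ and so Borel, and \Cref{fact Borel} then gives the dichotomy, just as in the proof of \Cref{Lem interval}. Your check of the quantifier structure is also fine, with one terminological correction: $\forall i \in \Fml$ is an unbounded universal quantifier restricted to a recursive set rather than a bounded quantifier, and this does not change the $\Pi^0_2$ count.
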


\begin{proof}
    This follows from \Cref{Lem pretabular Pi02} analogously to the proof of \Cref{Lem interval}.
\end{proof}

\section{Conclusion and future research} \label{Sec 4}

In this paper, we applied descriptive set theory to study the cardinality of classes of equational theories and logics via coding. The cardinality results were formulated for equational theories, varieties, modal logics, and tense logics to match each specific setting. However, the idea of coding applies to any formal system with a reasonable syntax such as \Cref{2: Thm equational theory}. For instance, the cardinality result of intervals (\Cref{Lem interval}) also holds for \emph{quasivarieties} and \emph{universal classes}. Moreover, \Cref{Thm tense pretab} holds for modal logics as well because of a characterization of tabular modal logics similar to \Cref{fact tense tab} (see, e.g., \cite[Corollary 12.2]{czModalLogic1997}. In practice, such results are of interest only if there is some sort of antidichotomy result for cardinals $\leq \aleph_0$ and $\conti$. 

A concrete open question is discussed in \Cref{remark open 1}, namely \cite[Question 6.4 (i)]{jacksonMonoidVarietiesExtreme2018} in the setting of finite languages, and specifically in the setting of monoids. We also mention two limitations of our method. An obvious one is that it heavily relies on coding. Therefore, the method works only if everything involved (e.g., equations, formulas, finite Kripke frames) can be effectively and uniquely coded. For example, this is not the case for Kripke frames, which form a proper class. We leave it for future research to apply this method to count Kripke complete logics or the \emph{degree of Kripke incompleteness} \cite{fineIncompleteLogicContainingS41974}. 

Another point is that the perfect set property is just one of the  \emph{regular properties} that Borel sets enjoy. Other regular properties include \emph{Lebesgue measurability} and the \emph{Baire property}. But these do not have immediate consequences for equational theories or logics. We leave it for future research to obtain results for classes of equational theories or logics, other than determining their cardinalities, by using the complexity of their corresponding sets of reals.

\subsection*{}
\begin{acknowledgements}
We are very grateful to George Metcalfe, Simon Santschi, and Niels Vooijs for pointing us to the open questions in \cite{jacksonMonoidVarietiesExtreme2018}. We would also like to thank Yurii Khomskii for valuable comments on an early draft of the paper. We are also grateful to the two anonymous reviewers for their careful reading and helpful suggestions. 
\end{acknowledgements}

\begin{funding}
    The first author would like to acknowledge the support of the Austrian Science Foundation (FWF) through grants 10.55776/STA139 and 10.55776/ESP3. The third author would like to acknowledge the support of the Student Exchange Support Program of the Japan Student Services Organization and the Student Award Scholarship of the Foundation for Dietary Scientific Research.
\end{funding}

\printbibliography[heading=bibintoc,title=References]

\end{document}